
\documentclass[12pt,oneside,reqno]{amsart}

\oddsidemargin0.25in
\evensidemargin0.25in
\textwidth6.00in
\topmargin0.00in
\textheight8.50in

\newcommand{\supp}{\operatorname{supp}}

\usepackage{amssymb}
\usepackage{amsthm}
\usepackage{amsxtra}
\usepackage{graphicx}
\usepackage{color}
\usepackage{xcolor}
\usepackage{mathrsfs}

\newtheorem{theorem}{Theorem}

\newtheorem{lemma}[theorem]{Lemma}

\theoremstyle{remark}

\numberwithin{equation}{section}

\numberwithin{theorem}{section}

\numberwithin{table}{section}

\numberwithin{figure}{section}

\ifx\pdfoutput\undefined
  \DeclareGraphicsExtensions{.pstex, .eps}
\else
  \ifx\pdfoutput\relax
    \DeclareGraphicsExtensions{.pstex, .eps}
  \else
    \ifnum\pdfoutput>0
      \DeclareGraphicsExtensions{.pdf}
    \else
      \DeclareGraphicsExtensions{.pstex, .eps}
    \fi
  \fi
\fi

\title{Focusing solutions of the Vlasov-Poisson System}

\author{Katherine Zhiyuan Zhang}
\address{Brown University}

\begin{document}

\maketitle

\begin{abstract}

We study smooth, spherically-symmetric solutions to the Vlasov-Poisson system and relativistic Vlasov-Poisson system in the plasma physical case. We construct solutions that initially possess arbitrarily small $C^k$ norms ($k \geq 1$) for the charge densities and the electric fields, but attain arbitrarily large $L^\infty$ norms of them at some later time.

\end{abstract}

\section{Introduction}

We consider the one-species classical Vlasov-Poisson System (VP):
\begin{equation}
\partial_t f + v \cdot \nabla_x f +  E   \cdot \nabla_v f =0 \ ,
\end{equation}
\begin{equation}
E(t, x) = \int_{\mathbb{R}^3} \frac{x-y}{|x-y|^3} \rho (t, y) dy \ . 
\end{equation}
Here, $f (t, x, v) \geq 0$ is the density distribution of the particles. In the equation, $x \in \Omega \subset \mathbb{R}^3$ is the particle position, $v \in \mathbb{R}^3$ is the particle momentum, and $E$ is the electric field. Moreover, the charge density $\rho$ is defined as
\begin{equation}
\rho = \int_{\mathbb{R}^3} f (t, x, v) dv   \ .
\end{equation}

We also consider the Vlasov-Poisson System in the relativistic setting (RVP):
\begin{equation}
\partial_t f + \hat{v} \cdot \nabla_x f +E  \cdot \nabla_v f =0 \ ,
\end{equation}
\begin{equation}
E(t, x) = \int_{\mathbb{R}^3} \frac{x-y}{|x-y|^3} \rho (t, y) dy \ . 
\end{equation}
Here $\hat{v} = \frac{v}{\sqrt{1 +|v|^2}}$ is the velocity.

The systems VP and RVP enjoy the conservation of the total mass
\begin{equation}
M (t) = \int_\Omega \int_{\mathbb{R}^3}  f (t, x, v) dv dx =  \int_\Omega \int_{\mathbb{R}^3}  f_0 ( x, v) dv dx : = M \ , \forall t  \ .
\end{equation}
Here $f_0$ is the initial particle density. 

We assume spherical symmetry in the problem. It is known that spherically symmetric initial data give rise to global-in-time, spherically symmetric solutions to the two systems, see \cite{H1}, \cite{LP1}, \cite{P1}. Also, \cite{H1} tells us that the solutions must be finite in the $L^\infty $ sense.

The behavior of the solution to VP has been an important topic that caught wide attention. In the paper \cite{BCP2} by J. Ben-Artzi, S. Calogero and S. Pankavich, it is shown that one can construct solutions of VP such that the particle density and the electric field are initially as small as desired, but become large as desired at some later time, as stated in the following theorem: \\
\textbf{Theorem} (J. Ben-Artzi, S. Calogero and S. Pankavich) For any positive constants $\eta$, $N$, there exists a smooth, spherically symmetric solution of VP, such that  
\begin{equation}  
\| \rho (0) \|_{L^\infty} \  , \ \| E (0) \|_{L^\infty} \leq \eta \ , 
\end{equation}
while for some $T >0$, 
\begin{equation} 
\| \rho (T) \|_{L^\infty} \  , \ \| E (T) \|_{L^\infty} \geq N \ , 
\end{equation}
An analogous result for RVP is proved in \cite{BCP1} by the same authors. Their results in \cite{BCP2} and \cite{BCP1} were inspired by a similar study by G. Rein and L. Teagert \cite{RT1}, which proves the existence of focusing solutions for the gravitational Vlasov-Poisson system, in which the electric force provides an attractive effect instead of a repulsive effect on the particles. It is striking that in contrast to the results in \cite{BCP2} and \cite{BCP1}, a classical estimate given by E. Horst in \cite{H1} in 1990 shows that any spherically symmetric solution must decay for $t$ sufficiently large. Namely, there exists $C>0$ and $T>0$, such that 
$$ \| \rho (t) \|_{L^\infty} \leq C t^{-2} \ , \ \| E(t) \|_{L^\infty} \leq C t^{-2}   $$
for all $t \geq T$. There is no contradiction between the conclusions in \cite{BCP2}, \cite{BCP1} and \cite{H1}.  

However, in the examples provided in \cite{BCP2} and \cite{BCP1}, the initial data are actually large in a $C^1$ sense. Indeed, every datum $\rho (0)$ constructed in \cite{BCP2} and \cite{BCP1} is supported on a spherical shell of radius $a_0$ (which is selected to be a large positive number) and the thickness of the shell is $\sim \epsilon^3$, where $\epsilon \lesssim a_0^{-1} N^{-1}$ is required to be sufficiently small. The $L^\infty$ norm of $\rho (0)$ is of size $\sim a_0^{-3}$. Hence the $C^1_x$ norm of $\rho (0)$ is of size $\sim a_0^{-3} \epsilon^{-3} \gtrsim N^3$.

In this paper, we consider initial data supported on an arbitrary shell that has small $C^k$ norm ($k \geq 1$), and obtain solutions that become large and are concentrated near the origin at some later time. We call them "focusing solutions". Specifically, we prove, for VP:
\begin{theorem} \label{mainresultVP}
For any positive integer $k$ and positive constants $\eta$, $N$, $b$, $\epsilon_0$, there exists a smooth, spherically symmetric solution of VP, such that 
\begin{equation}
\supp_x f(0, x, v) = \{  \frac{1}{2} b \leq |x| \leq \frac{3}{2} b   \} 
\end{equation}
and 
\begin{equation}  
\| \rho (0) \|_{C^k} \  , \ \| E (0) \|_{C^k} \leq \eta \ , 
\end{equation}
while there exists a function $T = T(b) $, which is increasing for $b >1$, such that
\begin{equation} 
\| \rho (T) \|_{L^\infty_{|x|\leq  \epsilon_0}} \  , \ \| E (T) \|_{L^\infty } \geq N \ , 
\end{equation}
\end{theorem}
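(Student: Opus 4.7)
The strategy is to construct initial data supported on the prescribed thick shell $\{b/2 \leq |x| \leq 3b/2\}$ in which every particle is given a purely inward radial velocity tuned so that the concentric shells collapse nearly homothetically and concentrate inside a small ball of radius $\epsilon_0' \leq \epsilon_0$ at a later focal time $T = T(b)$. The essential point is that in \cite{BCP2} the shell has thickness of order $\epsilon^3$, which is what forces large $C^1$-norms; by instead using a thick shell of thickness of order $b$, the same total mass is distributed over a much larger volume and every fixed derivative of $\rho(0)$ and $E(0)$ is automatically small, which upgrades the $L^\infty$-smallness of \cite{BCP2} to the desired $C^k$-smallness.

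First I would fix a focal scale $\epsilon_0' \leq \epsilon_0$, a total mass $M_0$, and a focusing rate $\alpha > 0$ (all to be selected below), and take
$$f_0(x, v) = A \, \phi\!\left(\frac{|x|}{b}\right) \psi\!\left(\frac{v + \alpha x}{w}\right),$$
where $\phi \in C_c^\infty((\tfrac12,\tfrac32))$ and $\psi \in C_c^\infty(\mathbb{R}^3)$ are smooth positive radial bumps, $w > 0$ is a small velocity spread, and $A$ is fixed so that $\int f_0 = M_0$. Then $f_0$ is smooth, spherically symmetric, and supported in the prescribed shell, and a direct computation together with Poisson's equation yields $\|\rho(0)\|_{C^k} \lesssim_k M_0/b^{3+k}$ and $\|E(0)\|_{C^k} \lesssim_k M_0/b^2 + M_0/b^{k+2}$.

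Assuming no shell-crossings (verified a posteriori), spherical symmetry reduces the dynamics to the radial ODE $\ddot r = M_<(r_0)/r^2$ with $M_<(r_0) = \int_{|y|<r_0} \rho(0,y)\,dy$ conserved along each trajectory; energy conservation gives
$$\tfrac12 \dot r(t)^2 + \frac{M_<(r_0)}{r(t)} = \tfrac12 \alpha^2 r_0^2 + \frac{M_<(r_0)}{r_0},$$
so each particle reaches a minimum radius $r_{\min}(r_0) \leq 8 M_0/(\alpha^2 b^2)$. Choosing $\alpha$ so that $\alpha^2 b^2 \geq 8 M_0/\epsilon_0'$ forces $r_{\min}(r_0) \leq \epsilon_0'$ uniformly in $r_0 \in [b/2, 3b/2]$. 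In the free-streaming limit $M_0 \to 0$ the trajectories are the homothetic contractions $r(t) = r_0(1 - \alpha t)$ and all shells reach the origin at the common time $T_0 = 1/\alpha$; a standard ODE perturbation argument then promotes this to a common focal time $T(b) = T_0 + O(M_0/(\alpha^3 b^3)) \sim 1/\alpha$ at which every trajectory lies in $\{|x| \leq \epsilon_0'\}$.

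To close the argument I select $M_0 \sim N(\epsilon_0')^2$, so that Gauss's law gives $\|E(T)\|_\infty \geq M_0/(\epsilon_0')^2 \geq N$ just outside the concentrated mass and $\|\rho(T)\|_{L^\infty_{|x|\leq \epsilon_0}} \gtrsim M_0/(\epsilon_0')^3 \geq N/\epsilon_0' \geq N$ (using $\epsilon_0' \leq \min(1,\epsilon_0)$); and I select $\epsilon_0' := \min(\epsilon_0,\, c\sqrt{\eta b^{3+k}/N})$, which makes both $\|\rho(0)\|_{C^k}$ and $\|E(0)\|_{C^k}$ at most $\eta$. With these choices one has $\alpha \sim \sqrt{N \epsilon_0'}/b$ and hence $T(b) \sim b/\sqrt{N \epsilon_0'}$; once $b$ is large enough that $\epsilon_0' = \epsilon_0$, this is a strictly increasing linear function of $b$, which yields the required monotonicity for $b > 1$. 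The main technical obstacle is ruling out shell-crossings before time $T$: any crossing destroys the conservation of $M_<$ along characteristics and breaks the 1D ODE used above. Under the scaling $\alpha^2 b^2 \gg M_0/\epsilon_0'$, the self-interaction is a small perturbation of the homothetic contraction, whose Jacobian $\partial r/\partial r_0 = 1 - \alpha t$ preserves strict ordering for $t < T_0$; turning this into a quantitative no-crossing statement that remains valid down to the scale $\epsilon_0'$ (where the Coulomb force is largest) requires a Gronwall estimate on the separation of adjacent radial trajectories, and this is the technical heart of the proof.
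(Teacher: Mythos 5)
Your construction (a thick shell of thickness $\sim b$ carrying small total mass, with velocities clustered around the inward homothetic profile $v\approx-\alpha x$, plus the mass-concentration argument at the focal time) is essentially the same as the paper's, but the dynamical step is left with a genuine gap. You reduce each trajectory to the ODE $\ddot r = M_<(r_0)/r^2$ with $M_<(r_0)$ conserved along the motion; this is only legitimate if no shell crossings occur before time $T$, and you explicitly defer that no-crossing Gronwall estimate, calling it the technical heart. Likewise the statement that a "standard ODE perturbation argument" yields a common focal time $T=1/\alpha+O(M_0/(\alpha^3 b^3))$ at which \emph{every} trajectory lies in $\{|x|\le\epsilon_0'\}$ is asserted, not proven. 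So the step that actually produces simultaneous focusing is missing. Note also that this detour is unnecessary: the paper's Lemma~\ref{behaviorofcharacteristicsVP} (from \cite{BCP2}) bounds the true nonlinear characteristics by $R(t)^2\le (r+wt)^2+(lr^{-2}+Mr^{-1})t^2$ for $t<T_0$, using only $m(t,R)\le M$ and conservation of angular momentum, with no ordering of shells required; inserting $w\approx-\alpha r$, $T\approx 1/\alpha$ and the smallness of $M$ gives the uniform bound $R(T)\le K$ at a single common time directly, which is exactly what your unproven perturbation/no-crossing argument is meant to supply.

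There are also parameter-selection errors. With $M_0\sim N(\epsilon_0')^2$ and $\epsilon_0'=\min\big(\epsilon_0,\,c\sqrt{\eta b^{3+k}/N}\big)$, the bound $\|E(0)\|_{C^k}\lesssim M_0/b^2$ is \emph{not} $\le\eta$ for all $b$: at the crossover scale $b\sim(\epsilon_0^2N/\eta)^{1/(3+k)}$ with $N$ large and $\eta$ small one gets $M_0/b^2\sim\sqrt{N\eta}\gg\eta$. You need the additional constraint $M_0\lesssim\eta b^2$ (equivalently $\epsilon_0'\lesssim b\sqrt{\eta/N}$); the paper avoids this tension by decoupling the mass scale and the focal radius through two independent parameters $a_0$ and $\epsilon$. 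Relatedly, your monotonicity of $T(b)$ is only claimed "once $b$ is large enough that $\epsilon_0'=\epsilon_0$"; on the intermediate range $b>1$ where $\epsilon_0'\propto b^{(3+k)/2}$ your $T\sim b/\sqrt{N\epsilon_0'}\propto b^{(1-k)/4}$ is non-increasing for $k\ge1$, so the theorem's requirement that $T(b)$ be increasing on all of $(1,\infty)$ is not met; the paper achieves it by taking $a_0=nb$ and $\epsilon$ constant for $b>1$, which makes $T(b)$ linear and increasing. Finally, the velocity spread $w$ in your data needs an explicit constraint of the form $w/\alpha\lesssim\epsilon_0'$, otherwise the cloud at time $T$ need not fit inside the ball of radius $\epsilon_0'$.
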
 

\begin{flushleft} \textbf{Remark 1.2} 
\it{i) The data constructed in Theorem \ref{mainresultVP} do not belong to the class $\mathfrak{J}$ defined in \cite{BCP2}. In Theorem \ref{mainresultVP}, the constant $b$ can be chosen arbitrarily. This means that $\supp_x f (0)$ can be a thick spherical shell with both its central radius and thickness $\sim b$. On the other hand, the data in the class $\mathfrak{J}$ have particle density functions supported in a thin shell with thickness $\sim \epsilon^3$, where $\epsilon>0$ is chosen to be a small number. } \\
\textit{ii) In case $1 \leq k \leq 5$ we actually construct a function $T(b)$ that is increasing on $(0, +\infty)$. } \\
\textit{iii) We also construct focusing solutions for RVP with the $C^k$ norms for $\rho (t, x)$ and $E(t, x)$ being small at time zero and growing large at some later time, see Theorem \ref{mainresultRVP}.} \\
\textit{iv) Theorem \ref{mainresultVP} actually enables us to easily give a similar result in the setting with a bounded domain, see Remark 3.1 in Section 3. }\\
\textit{v) By perturbation, results analogous to Theorem \ref{mainresultVP} and Theorem \ref{mainresultRVP} hold for multi-species VP and RVP (with one of the species dominating the behaviour of the plasma, so the plasma is "almost single species"). }
\end{flushleft}

%in Theorem \ref{mainresultVP} for each $f(0)$ constructed in Theorem \ref{mainresultVP}, the support $\supp_x f (0)$ covers a "thick" spherical shell with both its radius and thickness $\sim b$ (where $b$ is a positive number that can be prescribed arbitrarily)

The contents of the paper are arranged as follows. In Section 2, we give some lemmas that describe the particle trajectories, which allow us to observe the focusing phenomena. Section 3 is devoted to the proof of Theorem \ref{mainresultVP}, which involves a careful selection of parameters and computation of the norms of $\rho (t, x)$ and $E(t, x)$. At the end of Section 3 we also give the corollary on the setting with a bounded domain. The analogous result on RVP to Theorem \ref{mainresultVP} will be stated (see Theorem \ref{mainresultRVP}) and proved in Section 4.

\section{Characteristics and Useful Lemmas}

A spherically symmetric solution to VP or RVP can be described as 
$$(f(t, r, w, l), E(t, r)) $$
where the spatial radius $r$, radial velocity $w$ and square of the angular momentum $l$ are defined as follows:
\begin{equation}
r = |x| \ , \ w = \frac{x \cdot v}{r} \ , \ l = |x \times v |^2 \ , 
\end{equation}
By a change of variables $(x, v) \rightarrow (r, w, l)$, using $ \nabla_x r = \frac{1}{r} x $, $ \partial_{x_j} w = v_j (\frac{1}{r} - \frac{x_j^2}{r^3} ) $, $ \nabla_v w = \frac{1}{r} x $, $ \nabla_x l = 2 |v|^2 x - 2 rw v $, $\nabla_v l = 2 r^2 v - 2rw x$, we reduce the Vlasov equation to
\begin{equation}  \label{VPreformulated}
\partial_t f +  w \partial_r f + \big( \frac{l}{r^3  } + \frac{m(t, r)}{r^2} \big) \partial_w f =0 
\end{equation}
for the system VP, and similarly, for RVP the Vlasov equation is reduced to
\begin{equation}
\partial_t f + \frac{w}{\sqrt{1+ w^2 + l r^{-2}}} \partial_r f + \big( \frac{l}{r^3 \sqrt{1 +  w^2 + l r^{-2} } } + \frac{m(t, r)}{r^2} \big) \partial_w f =0 \ .
\end{equation}
 
Here
\begin{equation} \label{mtr}
m(t, r) = 4 \pi \int^r_0 s^2 \rho (t, s) ds \ ,
\end{equation}
and
\begin{equation} \label{rhotr}
\rho (t, r) = \frac{\pi}{r^2} \int^\infty_0 \int^\infty_{-\infty} f(t, r, w, l) dw dl \ .
\end{equation}
The electric field is then 
\begin{equation} \label{electricfieldexpression}
E(t,x) = \frac{m(t, r) x}{r^3} \ ,
\end{equation}
since we can verify  
\begin{equation}
\begin{split}
div \big(  \frac{m(t, r) x}{r^3}  \big)
& = 4 \pi \big( \frac{r^2 \rho (t, r)}{r^3} -3 \frac{ \int^r_0 s^2 \rho (t, s) ds }{r^4 } \big) e_r \cdot x \\
& + 4 \pi  \frac{\int^r_0 s^2 \rho (t, s) ds}{r^3} \cdot 3 \\
& = 4 \pi \rho (t, r) \ , \\
\end{split}
\end{equation}
and
$$ curl \big(  \frac{m(t, r) x}{r^3}  \big)= 4 \pi \nabla \big( \frac{ \int^r_0 s^2 \rho (t, s) ds  }{r^3} \big) \times x + 4 \pi \frac{ \int^r_0 s^2 \rho (t, s) ds  }{r^3} \ (curl \ x ) = 0 \ .  $$
This is enough to verify that the formula $E(t,x) = \frac{m(t, r) x}{r^3} $ gives an $E$ that satisfies the Vlasov-Maxwell system with $B =0$. To see it matches the expression $E(t, x) = \int_{\mathbb{R}^3} \frac{x-y}{|x-y|^3} \rho (t, y) dy$, note that they could only differ by the gradient of a harmonic function $g(x)$. However, since we assume $E(t, x)$ has finite $L^\infty$ norm, $g(x)$ must be finite too. By Liouville's Theorem, $g$ must be a constant, which implies that the two $E(t, x)$ expressions matches each other.
 
The total mass of the plasma is 
\begin{equation}
M = 4 \pi^2 \int^\infty_0 \int^\infty_{-\infty}  \int^\infty_0 f_0( r, w, l) dl dw dr \ . 
\end{equation}

Next we introduce the characteristics for VP and RVP, as well as the lemmas that give detailed information for the particle trajectories.

The forward characteristics $(R(s), W(s),  L(s))$ of the Vlasov equation in the non-relativistic setting are described by the following ODE system:
\begin{equation} \label{particletrajectoryODEVP}
\begin{split}
& \frac{d R}{ds} = W \ , \\
& \frac{d W}{ds} =  \frac{L}{R^3 } + \frac{m(s, R)}{R^2} \ ,  \\
& \frac{dL}{ds} =0 \ . \\
\end{split}
\end{equation}
for $s \geq 0$, with the initial conditions 
\begin{equation}  \label{particletrajectoryinitialcondition}
R(0) =r \ , \ W(0) =w \ , \ L(0) =l \ . 
\end{equation}

We have the following lemma from \cite{BCP2}:

\begin{lemma} \label{behaviorofcharacteristicsVP}
Let $r>0$, $l >0$, $w < 0$ be given, and let $(R(t), W(t), L(t))$ be a solution to \eqref{particletrajectoryODEVP} and \eqref{particletrajectoryinitialcondition} for all $t \geq 0$.
Then we have:   \\
(1) There exists a unique $T_0 > 0$ such that $W(t) < 0$ for $t \in [0, T_0 )$, $W(T_0) =0 $, and $W(t) > 0$ for $t \in (T_0, +\infty )$.   \\
(2) $T_0$ satisfies   
$$ T_0 \geq \frac{r}{|w|} \big(  1 - \sqrt{\frac{l +Mr}{r^2  w^2 + l + Mr  }}  \big)  \ .   $$ 
(3) For all $t \in [0, T_0)$, we have
$$ R(t)^2 \leq  ( r + w t   )^2 + (l r^{-2} + M r^{-1} ) t^2   \ . $$
\end{lemma}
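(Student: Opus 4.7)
The plan is to handle the three parts in order, exploiting the nonnegativity of the right-hand side of the $W$-equation (coming from the angular momentum barrier $l/R^3$ and the repulsive electric contribution $m(s,R)/R^2$), together with two monotone energy-type quantities.

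For part (1), I would observe that $\dot{W} = lR^{-3} + m(s,R)R^{-2} > 0$ strictly, since $L\equiv l>0$ and $m\geq 0$, so $W$ is strictly increasing from $w<0$. If $W$ were to stay nonpositive on all of $[0,\infty)$, then $R$ would be nonincreasing, hence $R\leq r$, whence $\dot{W}\geq l/r^3$ and $W(t)\geq w+(l/r^3)t\to +\infty$, a contradiction. This gives existence of a unique first zero $T_0$; the sign structure on either side of $T_0$ follows from strict monotonicity.

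For part (2), my key tool will be the modified energy
\[
\mathcal{E}(t) := W(t)^2 + \frac{l+Mr}{R(t)^2}.
\]
A direct computation gives
\[
\dot{\mathcal{E}}(t) = \frac{2W(t)}{R(t)^3}\bigl(m(t,R(t))R(t) - Mr\bigr),
\]
and on $[0,T_0)$ one has $W\leq 0$, $m\leq M$, and $R\leq r$, so $mR\leq Mr$ and hence $\dot{\mathcal{E}}\geq 0$. Evaluating $\mathcal{E}(T_0)\geq \mathcal{E}(0)$ and using $W(T_0)=0$ produces the upper bound
\[
R(T_0)\leq r\sqrt{\frac{l+Mr}{r^2w^2+l+Mr}}.
\]
Separately, the standard radial energy $\tfrac12 W^2 + l/(2R^2)$ is nonincreasing on $[0,T_0)$ (its time derivative equals $Wm/R^2\leq 0$); combined with $R\leq r$ this forces $|W|\leq |w|$, and integrating $\dot{R}=W\geq w$ yields $R(T_0)\geq r-|w|T_0$. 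Rearranging the two-sided bound on $R(T_0)$ gives the claimed lower bound on $T_0$.

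For part (3), I would start from the second-derivative identity
\[
\frac{d^2}{dt^2}\bigl(R^2\bigr)=2|v|^2 + \frac{2m(t,R)}{R},
\]
where $|v|^2=W^2+l/R^2$. The kinetic term is controlled by the energy bound $|v|^2\leq |v_0|^2=w^2+l/r^2$ inherited from the radial-energy argument of part (2). For the potential term I would use $m\leq M$ together with the lower bound on $R$ coming from energy (effectively comparing with a Kepler-like reference dynamics whose conserved angular-momentum-squared quantity is $l+Mr$), showing that the integrated contribution of $2m/R$ against the weight $(t-s)$ is controlled by $(M/r)t^2$. Integrating twice from $R^2(0)=r^2$ and $(R^2)'(0)=2rw$ then delivers the stated bound.

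The hardest step will be part (3): unlike parts (1) and (2), which follow cleanly from strict monotonicity of $W$ and of $\mathcal{E}$, the naive pointwise inequality $m/R\leq M/r$ fails on $[0,T_0)$ since $R\leq r$ there. The needed bound must instead come from careful averaging along the trajectory, exploiting the partial cancellation between the strict decrease of $|v|^2$ and the growth of $m/R$, along the lines of the argument in \cite{BCP2}.
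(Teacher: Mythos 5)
The paper gives no in-house proof of this lemma (it just cites Lemmas 3 and 4 of \cite{BCP2}), so the only question is whether your argument is complete. Parts (1) and (2) are correct and self-contained: $\dot W=lR^{-3}+mR^{-2}>0$ and the contradiction argument give the turning time; the computation $\dot{\mathcal E}=2WR^{-3}(mR-Mr)\ge 0$ on $[0,T_0)$ (using $W<0$, $m\le M$, $R\le r$) gives $R(T_0)\le r\sqrt{(l+Mr)/(r^2w^2+l+Mr)}$; the decay of $\frac12W^2+\frac{l}{2R^2}$ plus $R\le r$ gives $W\ge w$, hence $R(T_0)\ge r+wT_0$, and the two bounds combine to exactly the stated lower bound on $T_0$.

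Part (3), however, is a sketch with an acknowledged hole, not a proof: the decisive estimate is deferred to ``careful averaging \ldots along the lines of \cite{BCP2}.'' Concretely, after writing $(R^2)''=2(W^2+lR^{-2})+2m/R$ and using $W^2+lR^{-2}\le w^2+lr^{-2}$, what remains to show is $\int_0^t(t-s)\,\frac{2m}{R}\,ds\le \frac{M}{r}t^2$ plus whatever slack the kinetic term leaves, and the tools you name do not deliver this: the only energy lower bound on $R$ is $R\ge r\sqrt{l}/\sqrt{r^2w^2+l}$, which degenerates as $l\to0$ and cannot produce the coefficient $M/r$; and the pointwise cancellation you invoke is of indeterminate sign, since the excess $\frac{m}{R}-\frac{M}{r}\le -M\int_0^s R^{-2}\dot R\,d\tau$ is offset by the kinetic slack $-2\int_0^s mR^{-2}\dot R\,d\tau$ only when $m\ge M/2$ along the way in. A workable route uses machinery you already set up, but compared at equal radii rather than equal times: on $[0,T_0)$, $m\le M$ and $R\le r$ give $\ddot R\le (l+Mr)R^{-3}$, while $\phi(t):=\sqrt{(r+wt)^2+(l+Mr)r^{-2}t^2}$ solves $\ddot\phi=(l+Mr)\phi^{-3}$ with the same data $(r,w)$; since your $\mathcal E=\dot R^2+(l+Mr)R^{-2}$ is nondecreasing in $t$ whereas the analogous quantity for $\phi$ is conserved, at any common radius the true trajectory has the larger inward speed, so it reaches each radius no later than $\phi$ does, and monotonicity of $R$ on $[0,T_0)$ (together with $R(T_0)\le\min\phi$ from part (2) when $\phi$ turns first) yields $R(t)\le\phi(t)$, which is precisely (3). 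As submitted, part (3) remains a genuine gap.
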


\begin{proof}
Please see Lemma 3 and 4 in \cite{BCP2} for details. 
\end{proof}

For RVP the forward characteristics $(R(s), W(s),  L(s))$ of the Vlasov equation are described by
\begin{equation} \label{particletrajectoryODE}
\begin{split}
& \frac{d R}{ds} = \frac{W}{\sqrt{1+W^2 +L R^{-2}}} \ , \\
& \frac{d W}{ds} =  \frac{L}{R^3 \sqrt{1 +  W^2 + L R^{-2} } } + \frac{m(s, R)}{R^2} \ ,  \\
& \frac{dL}{ds} =0 \ . \\
\end{split}
\end{equation}
for $s \geq 0$, with the initial conditions \eqref{particletrajectoryinitialcondition}.

We introduce the following lemma from \cite{BCP1}:
\begin{lemma} \label{behaviorofcharacteristics}
Let $r>0$, $l >0$, $w < 0$ be given, and let $(R(t), W(t), L(t))$ be a solution to \eqref{particletrajectoryODE} and \eqref{particletrajectoryinitialcondition} for all $t \geq 0$, and define
$$ D = l + M r \sqrt{1+ w^2 + l r^{-2}} \ .$$ 
Then we have:  \\
(1) There exists a unique $T_0 > 0$ such that $W(t) < 0$ for $t \in [0, T_0 )$, $W(T_0) =0 $, and $W(t) > 0$ for $t \in (T_0, +\infty )$. \\
(2) $T_0$ satisfies 
$$ T_0 \geq r \big( 1 - \sqrt{\frac{D}{r^2 w^2 + D}}   \big)  \ .   $$
(3) There holds
$$ R_- \leq R(T_0) \leq R_+ \ , $$
where
$$ R_- : =  r \sqrt{\frac{l}{r^2 w^2 +l}}  \ , \ R_+ : =  r \sqrt{\frac{D}{r^2 w^2 +D}} \ .  $$
(4) For all $t \in [0, T_0)$, we have
$$ W(t)^2 + l R(t)^{-2} \leq w^2 + l r^{-2}  \ . $$
(5) For all $t \in [0, T_0)$, we have
$$ R(t)^2 \leq \big( r - \frac{|w|}{ \sqrt{1+ w^2 + l r^{-2}}} t  \big)^2 + \frac{D}{r^2 (1+ w^2 + l r^{-2} ) } t^2   \ . $$
\end{lemma}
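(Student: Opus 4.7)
The plan is to exploit two monotonicity identities along the relativistic characteristics, deduce parts (1), (4), and the lower bound of (3) directly, then bootstrap a Taylor-type estimate on $R(t)^2$ to obtain (5), from which the upper bound of (3) and part (2) follow.

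Setting $\gamma(s) := \sqrt{1+W(s)^2+LR(s)^{-2}}$ and $\beta := 1+w^2+lr^{-2}$, I first compute along \eqref{particletrajectoryODE} the identities
\[
\dot\gamma = \frac{Wm(s,R)}{R^2\gamma}, \qquad \frac{d}{ds}\!\left[R^2(1+W^2)\right] = 2W(R\gamma + m).
\]
Since $W(s) < 0$ and $m,\gamma > 0$ on $[0,T_0)$, both $\gamma$ and $R^2(1+W^2)$ are strictly decreasing there. The first identity yields (4) immediately: $W^2+LR^{-2} = \gamma^2-1 \leq \beta-1$. For (1), note $\dot W = L/(R^3\gamma)+m/R^2 > 0$ as $L=l>0$, so $W$ is strictly increasing; (4) gives $R \geq \sqrt{l/(w^2+lr^{-2})} > 0$, hence $\dot W$ is uniformly positive and $W$ must cross zero at a unique finite $T_0$. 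Evaluating (4) at $t=T_0$ gives $R(T_0) \geq r\sqrt{l/(r^2w^2+l)} = R_-$, the lower bound in (3).

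Next, I prove (5) by Taylor's theorem applied to $\phi(t) := R(t)^2$. With $\phi(0) = r^2$ and $\phi'(0) = 2rw/\sqrt\beta$, a direct computation yields
\[
\phi''(s) = \frac{2(W^2+LR^{-2})}{\gamma^2} + \frac{2m(1+LR^{-2})}{R\gamma^3}.
\]
The first summand is bounded by $2(\beta-1)/\beta$ using $\gamma^2 \leq \beta$ and the monotonicity of $x \mapsto 1 - x^{-1}$. Using $m \leq M$ together with the auxiliary bound $R\gamma \leq r\sqrt\beta$ (from the second monotonicity above) and the factorization $(1+LR^{-2})/\gamma^2 = 1 - W^2/\gamma^2$, the second summand is bounded by $2M/(r\sqrt\beta)$. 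Summing yields $\phi''(s) \leq 2w^2/\beta + 2D/(r^2\beta)$, and the Taylor integral remainder $\phi(t) = \phi(0) + \phi'(0)t + \int_0^t (t-s)\phi''(s)\,ds$ then produces exactly (5).

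The upper bound of (3) and part (2) follow by short post-processing. Since $\dot R = W/\gamma < 0$ on $[0,T_0)$, $R$ is strictly decreasing and attains its minimum at $T_0$. The parabolic majorant in (5) is minimized over $t \geq 0$ at the value $R_+^2 = r^2 D/(r^2 w^2+D)$, so $R(T_0) \leq R_+$ by elementary calculus, with a short continuity argument covering the case where the minimizer lies beyond $T_0$. Finally, the relativistic speed bound $|\dot R| = |W|/\gamma \leq 1$ gives $r - R(T_0) = \int_0^{T_0}|\dot R|\,ds \leq T_0$, hence $T_0 \geq r - R_+ = r(1 - \sqrt{D/(r^2w^2+D)})$, which is (2). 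The main obstacle is the bound on the second summand of $\phi''$ in the Taylor step: the naive chain $(1+LR^{-2})/(R\gamma^3) \leq 1/(R\gamma)$ combined with $R\gamma \leq r\sqrt\beta$ actually runs the wrong direction, so closing the estimate requires either exploiting cancellation between the two summands of $\phi''$, or rewriting the integrated remainder via integration by parts using the ODE identity $m/R^2 = \dot W - L/(R^3\gamma)$ to recover the clean quadratic majorant.
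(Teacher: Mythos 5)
Your preliminary identities are correct: $\dot\gamma = Wm/(R^2\gamma)$ and $\tfrac{d}{ds}[R^2(1+W^2)] = 2W(R\gamma+m)$ do follow from \eqref{particletrajectoryODE}, and they give (4), the lower bound in (3), and (1) essentially as you say (one small slip in (1): uniform positivity of $\dot W$ while $W\le 0$ comes from the \emph{upper} bounds $R\le r$ and $\gamma\le\sqrt{\beta}$ on that time interval, not from the lower bound on $R$ supplied by (4)). The genuine gap is exactly where you flag it: part (5), which is the part of the lemma the paper actually uses, is not proved. Your strategy needs the pointwise bound $\tfrac{2m(1+LR^{-2})}{R\gamma^{3}}\le \tfrac{2M}{r\sqrt{\beta}}$, but the monotonicity you have gives $R\gamma\le r\sqrt{\beta}$, i.e.\ $1/(R\gamma)\ge 1/(r\sqrt{\beta})$ --- the wrong direction. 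This is not merely a technical wrinkle: the offending term scales like $m/R$, and on $[0,T_0)$ the radius is only bounded below by $R_-$, which is far smaller than $r$ when $l$ is small, so a pointwise majorization of $\phi''$ by the constant $2w^2/\beta+2D/(r^2\beta)$ is simply not available in general; the ``sup of $\phi''$ times $t^2/2$'' Taylor scheme cannot close no matter how the two summands are grouped, and the alternative fix you mention (substituting $m/R^{2}=\dot W-L/(R^{3}\gamma)$ and integrating by parts) is not carried out and, at least in the nonrelativistic analogue, just reproduces the Taylor identity. Closing (5) requires a different mechanism, e.g.\ exploiting monotone combinations such as $\gamma+M/R$ (note $\tfrac{d}{ds}(\gamma+M/R)=(m-M)\dot R/R^{2}\ge 0$ while $W\le0$); this is the kind of argument behind Lemma 3 of \cite{BCP1}, to which the paper simply defers for the present lemma rather than giving a proof of its own.

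There is a second, dependent gap. Even granting (5), the upper bound in (3) does not follow by evaluating (5) at $t=T_{0}$: that only yields $R(T_{0})^{2}\le P(T_{0})$ for the quadratic majorant $P$, and $P(T_{0})\ge \min_{t}P=R_{+}^{2}$, which is the wrong way around. Your argument works when the minimizer $t^{*}$ of $P$ satisfies $t^{*}\le T_{0}$ (then $R(T_{0})\le R(t^{*})\le R_{+}$ since $R$ decreases on $[0,T_{0}]$), but the ``short continuity argument'' for the case $t^{*}>T_{0}$ is not supplied and is not obvious; in \cite{BCP1} the bound $R(T_0)\le R_+$ is obtained independently of (5). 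Since your proof of (2) --- which is otherwise a clean deduction from $|\dot R|=|W|/\gamma<1$ and $r-R(T_{0})\le T_{0}$ --- rests on $R(T_{0})\le R_{+}$, parts (2), the upper half of (3), and (5) are all left open; as it stands the proposal establishes only (1), (4) and $R(T_{0})\ge R_{-}$.
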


\begin{proof}
Please see Lemma 3 in \cite{BCP1} for details. 
\end{proof}

We denote
\begin{equation}
S(t) : = \{  (r,w, l) : f(t, r, w, l) > 0  \} \ . 
\end{equation}
In particular, 
\begin{equation}
S(0)  = \{  (r,w, l) :   f_0( r, w, l) > 0  \} \ .
\end{equation}
Also, we denote
\begin{equation}
S_+  = \{  (r,w, l) :  r >0, l> 0, w <0, f_0( r, w, l) > 0  \} \ .
\end{equation} 
We choose $T_0 > 0$ as in Lemma \ref{behaviorofcharacteristicsVP} and \ref{behaviorofcharacteristics} in the VP and RVP settings, respectively. We present the following lemma given in \cite{BCP2} and \cite{BCP1} in order to describe the concentrating phenomenon:
\begin{lemma} \label{concentrationlemma}
Let $f (t, r, w, l)$ be a spherically-symmetric solution of RVP or VP with associated charge density $\rho (t, r)$ and electric field $E(t, x)$. Let 
$$(R(t, r, w, l), W(t, r, w, l), L(t, r, w, l) )$$
be a solution to the equations of the particle trajectories with the initial condition \eqref{particletrajectoryinitialcondition}. If at some time $T >0$ we have
$$ \sup_{(r, w, l) \in S(0)} R (T, r, w, l) \leq K  \ , $$
then
$$ \| \rho (T) \|_{L^\infty_{|x|\leq K }} \geq \frac{3 M}{4 \pi K^3} \ , \  \| E (T) \|_{L^\infty} \geq \frac{  M}{ K^2}  \ .     $$
\end{lemma}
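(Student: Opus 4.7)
The plan is to observe that the hypothesis $\sup_{S(0)} R(T,r,w,l) \leq K$ is really a statement about the \emph{spatial support} of the solution at time $T$. Since $f$ is constant along the characteristics of the Vlasov equation, the image of $\supp f_0$ under the flow coincides with $\supp f(T,\cdot,\cdot)$. Combined with spherical symmetry (so that $|X(T)| = R(T,r,w,l)$ for a characteristic starting at $(x,v)$ with $r = |x|$), the hypothesis forces $\supp \rho(T,\cdot) \subset \overline{B_K(0)}$.

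Once this support statement is in hand, both inequalities become elementary consequences of conservation of mass. For the density bound I would simply integrate:
\[
M = \int_{\mathbb{R}^3} \rho(T,x)\, dx = \int_{|x|\leq K} \rho(T,x)\, dx \leq \tfrac{4\pi}{3} K^3 \, \|\rho(T)\|_{L^\infty_{|x|\leq K}},
\]
and rearrange to obtain the first conclusion. For the field bound, the representation \eqref{electricfieldexpression} together with \eqref{mtr} gives $|E(T,x)| = m(T,|x|)/|x|^2$, and for any $|x| \geq K$ the support condition forces $m(T,|x|) = M$; evaluating at $|x| = K$ (or taking the limit from the exterior) then yields the desired $M/K^2$ lower bound.

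There is no substantial obstacle here — this lemma is an elementary packaging result that converts the dynamical bound on characteristics (the content of Lemmas \ref{behaviorofcharacteristicsVP} and \ref{behaviorofcharacteristics}) into the quantitative lower bounds on $\rho$ and $E$ needed to drive the proof of the main theorem. The only mild point of care is ensuring the hypothesis applies to every point in $\supp f_0$, not merely to the subset $S_+$ on which the earlier trajectory lemmas are formulated; but this is automatic here, since the concentration hypothesis is stated in terms of $S(0)$ rather than $S_+$.
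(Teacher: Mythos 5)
Your argument is correct and is essentially the same as the one the paper relies on (the paper simply cites Lemma 5 of \cite{BCP2} and Lemma 4 of \cite{BCP1}, whose proofs proceed exactly as you do): the hypothesis confines $\supp \rho(T)$ to $\{|x|\leq K\}$, mass conservation gives the density bound, and the radial representation \eqref{electricfieldexpression} with $m(T,K)=M$ gives $|E(T,x)|=M/K^2$ at $|x|=K$. No gaps; your remark that the hypothesis is stated over $S(0)$ rather than $S_+$ is the right point of care and is handled correctly.
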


\begin{proof}
Please see Lemma 5 in \cite{BCP2} or Lemma 4 in \cite{BCP1} for details. 
\end{proof}

\section{Focusing Solutions to the Nonrelativistic VP System}

Now we are ready to establish Theorem \ref{mainresultVP}.

\begin{proof}
Without the loss of generality, we assume $0 <\eta< 1$ and $N >1$ in the proof. We set up two constants $a_0 >1$ and $0 < \epsilon <1$ to be determined.

Let $H : [0, \infty) \rightarrow [0, \infty)$ be any function satisfying $ \int_{\mathbb{R}^3} H(|u|^2) du = \frac{3}{4 \pi} $ with $\supp (H) \subset [0, 1]$, and rescale it for any $\epsilon \in (0, 1)$:
$$ H_\epsilon (|u|^2) = \frac{1}{\epsilon^3} H \big( \frac{|u|^2}{\epsilon^2} \big)   \ . $$
so that $ \int_{\mathbb{R}^3} H_\epsilon(|u|^2) du = \frac{3}{4 \pi} $ and $\supp (H_\epsilon) \subset [0, \epsilon^2]$. For any $\epsilon >0$, $x$, $v \in \mathbb{R}^3$, $a_0 >0$, define 
$$ h_{\epsilon, a_0} (x, v) = H_\epsilon \big(  | \frac{x}{\epsilon^2} + a_0 v|^2 \big) \ . $$
Then $ \int_{\mathbb{R}^3} h_{\epsilon, a_0} (x, v) dv = \frac{3}{4 \pi a_0^3} $ for all $x$. In the proof, we will choose $a_0$ sufficiently large and $\epsilon$ sufficiently small.  

Moreover, we choose a cut-off function $\chi_{0, 1} \in C^\infty ((0, \infty); [0, 1] ) $ that satisfies 
\begin{equation*}
\begin{split}
& |\chi_{0, 1} (x)| \leq 1 \ , \\
& \chi_{0, 1} (x) = 1 , \ \text{for} \ |x| <  \frac{1}{2} \ , \\
& \chi_{0, 1} (x) =0, \ \text{for} \ |x| > 1 \ . \\
\end{split}
\end{equation*}
Let $\chi_{m, n} (x) : = \chi_{0, 1} (\frac{1}{n} (x-m))$, and take $\chi (r) = \chi_{b, \frac{1}{2} b} (r)$, then $\chi (r)$ is a smooth function supported on $[\frac{1}{2} b, \frac{3}{2} b]$, and $\chi (r ) = 1 $ for $r \in [\frac{3}{4} b, \frac{5}{4}b ]$. We denote $\| \frac{d^k}{dr^k} \chi_{0, 1} (r) \|_{L^\infty} = \alpha_k $, where the $\alpha_k$'s are constants independent of $b$. In particular, $\alpha_0 = 1$. Then
\begin{equation} \label{chiVP}
\| \frac{d^k}{dr^k} \chi  (r) \|_{L^\infty} \leq c_k :=  (\frac{2}{b})^k \alpha_k   \ . 
\end{equation}
In particular $c_0 =1$. Define $c_{-1} = b/2$. We denote $d_k =  \sum^{k}_{j=-1} c_j$.

We choose the initial data to be 
\begin{equation} \label{f0VP}
f_0 (x, v) = h_{\epsilon, a_0 } (x, v) \chi (|x|) \Gamma (\frac{x \cdot v}{|x|}) \ . 
\end{equation}
where $\Gamma (w) = 0$ for $w \geq 0$, and $\Gamma (w) =1$ for $w <0$. Hence $S(0) \setminus S_+ $ is a zero-measure set. Note that $\supp_{x, v} f_0 = \{ (x, v) : |\frac{x}{\epsilon^2} + a_0 v  |^2  < \epsilon^2 \}$. 
We have 
$$0 \leq   \rho (0, r) = \int_{\mathbb{R}^3} h_{\epsilon, a_0 } (x, v) \chi (|x|) \Gamma (\frac{x \cdot v}{|x|}) dv =  \frac{3}{8 \pi a_0^3} \chi (r) \ .   $$
Also, we compute
\begin{equation} \label{MsizeVP}
\frac{3}{8} a_0^{-3} b^3  \leq M \leq 4 a_0^{-3} b^3 \ . 
\end{equation}

From $ |\frac{x}{\epsilon^2} + a_0 v  |^2  < \epsilon^2 $, using the angular coordinates and the identity $ |v|^2 = w^2 + lr^{-2} $, we obtain
\begin{equation}
\big( \frac{r}{\epsilon^2} + a_0 w \big)^2 + l \big( \frac{a_0}{r} \big)^2 < \epsilon^2  \ .
\end{equation}  
We obtain $ | \frac{r}{\epsilon^2} + a_0 w |^2 < \epsilon^2 $, which implies $ | r+ a_0 w \epsilon^2  | < \epsilon^3 $. Therefore,
$$ \big| \frac{r}{|w|} - a_0 \epsilon^2  \big| < \frac{\epsilon^3}{|w|}  $$ for $(r, w, l) \in S(0)$, since $\Gamma (w) $ is non-zero if and only if $w < 0 $. 

Also, from the definition of $\chi (r) $ we obtain, for any $(r, w, l) \in S(0)$:
\begin{equation} \label{rbound}
\frac{1}{2} b <   r <   \frac{3}{2}  b \ .
\end{equation}
We pick 
\begin{equation} \label{a0condition4}
a_0 > b 
\end{equation} 
and $\epsilon$ small enough, such that 
\begin{equation} \label{epsiloncondition1}
-\frac{1}{2} a_0^{-1} b  \epsilon^{-2} + \frac{\epsilon}{a_0} < -1, \ - \frac{3}{2} \epsilon^{-2} < - \frac{3}{2} a_0^{-1} b \epsilon^{-2} - \frac{ \epsilon}{a_0} , \  \big( \frac{3b/2}{a_0} \big)^2 \epsilon^2<1 . 
\end{equation}
Therefore
\begin{equation} \label{wbound}
- \frac{3}{2} \epsilon^{-2} < - \frac{3}{2} a_0^{-1} b \epsilon^{-2} - \frac{ \epsilon}{a_0} < w <  -\frac{1}{2} a_0^{-1} b  \epsilon^{-2} + \frac{\epsilon}{a_0}  \ , 
\end{equation}
\begin{equation} \label{lbound}
l < \big( \frac{r}{a_0} \big)^2 \epsilon^2  <  \big( \frac{3b/2}{a_0} \big)^2 \epsilon^2<1 \ .
\end{equation}
We see that the cut-off $\Gamma$ does not have any effect on the smoothness of $f_0$ since $|w| > |-\frac{1}{2} a_0^{-1} b  \epsilon^{-2} + \frac{\epsilon}{a_0} | > 1$. 
We have
$$ \big| \frac{r}{|w|} - a_0 \epsilon^2  \big| < \frac{\epsilon^3}{|w|} < \epsilon^3  $$
since $|w| > 1$. Hence 
\begin{equation}  \label{r/wbound}
 \frac{r}{|w|} \geq  a_0 \epsilon^2 - \epsilon^3 \ . 
\end{equation}

By \eqref{electricfieldexpression} and \eqref{MsizeVP}, we can take
\begin{equation} \label{a0condition3}
a_0 \geq \max \{\eta^{-1/3}  , 4 \eta^{-1/3} b^{1/3} \} \ , 
\end{equation}
so that
\begin{equation}  \label{rho0bound}
 \| \rho (0) \|_{L^\infty} \leq \frac{3}{8 \pi a_0^3}  \leq \eta \ , 
\end{equation} 
\begin{equation}
 \| E (0) \|_{L^\infty} \leq \frac{4 M}{b^2} \leq  \frac{ 16 a_0^{-3} b^3}{b^2} \leq  \frac{1}{2} \eta  \ .
\end{equation}
Noticing that $\rho ( 0)$ and $E ( 0)$ are only functions of $r$, we have, due to \eqref{f0VP} and \eqref{chiVP}:
\begin{equation} \label{rho0Ckbound}
\| \rho (0) \|_{C^k_x}  \leq \sum_k  \| \frac{\partial^k}{\partial r^k}  ( \frac{3}{8 \pi a_0^3} \chi (r)) \|_{L^\infty}   \leq     \frac{3  }{8 \pi a_0^3 }   \sum^{k }_{j=0} c_j  \leq \eta \ ,
\end{equation} 
if we take
\begin{equation} \label{a0condition1}
a_0 \geq \frac{8 \pi}{3}  d_k^{1/3}  \eta^{-1/3} \ .
\end{equation}

Recall that in Theorem \ref{mainresultVP}, we require 
$$ \| E (0) \|_{C^k_x}  \leq   \eta  \ .  $$
We will show, for any $k \geq 0$, 
\begin{equation} \label{E0Ckboundcomputation}
\| E(0) \|_{C^k} \leq C_0 d_{k-1} a_0^{-3} 
\end{equation}
for some constant $C_0 >1$ to be chosen below, which only depends on $k$. If $k =0$, we can take $C_0 \geq 32$. In case $k=1$, using \eqref{mtr}, \eqref{rhotr}, \eqref{electricfieldexpression}, \eqref{MsizeVP}, \eqref{rho0bound}, and that $\frac{1}{2} b  < r < \frac{3}{2} b $, we compute
\begin{equation}
\begin{split}
| \partial_{x_j} E_i (0, r) |
& = | \partial_{x_j}  ( \frac{m (0, r)}{r^3} x_i )   |  \\
& \leq  |  \frac{\partial_{x_j} m (0, r) }{r^3} x_i  | + | m (0, r) \frac{-3 x_j x_i }{r^5} | + | \frac{m(0, r) }{r^3} \delta_{ij} |  \\
& \leq 4 \pi  | r^2 \rho (0, r) \frac{x_j}{r} \frac{x_i }{r^3}   | + | m (0, r) \frac{-3 x_j x_i }{r^5} | + | \frac{m(0, r) }{r^3} \delta_{ij} |  \\
& \leq 4 \pi \big[  | r^2 \frac{3}{ 8 \pi a_0^3}  \frac{x_j}{r} \frac{x_i }{r^3}   | + | \frac{3}{8 \pi a_0^3} \frac{1}{3} r^3 \frac{-3 x_j x_i }{r^5} | +  | \frac{3}{8 \pi a_0^3} \frac{1}{3} r^3 \frac{1}{r^3} \delta_{ij} |  \big] \\
& \leq   10   c_0 a_0^{-3}  \ , \\
\end{split}
\end{equation} 
so we can take $C_0  \geq \max \{ 32, 10 \} = 32 $ for $k =1$.   
Similarly, in case $k=2$, we make use of \eqref{mtr}, \eqref{rhotr}, \eqref{electricfieldexpression} to compute 
\begin{equation}
\begin{split}
& \quad | \partial^2_{x_j x_l} E_i (0, r) | \\
& =  | \partial_{x_l}  (  \frac{\partial_{x_j} m (0, r) }{r^3} x_i  +  m (0, r) \frac{-3 x_j x_i }{r^5}  + \frac{m(0, r) }{r^3} \delta_{ij})   |  \\
& \leq  |  \frac{\partial_{x_j x_l} m (0, r) }{r^3} x_i  | + | \partial_{x_j} m (0, r) \frac{-3 x_l x_i }{r^5} | + | \frac{\partial_{x_j} m(0, r) }{r^3} \delta_{il} |  \\
& \quad + |  \frac{\partial_{x_l} m (0, r) }{r^3} x_i x_j  (-3)| + | m (0, r) \frac{(-3) (-5) x_l x_i x_j }{r^7} | + | \frac{m(0, r) }{r^5} (-3) (x_i \delta_{jl} + x_j \delta_{il} ) |  \\
& \quad + |  \frac{\partial_{ x_l} m (0, r) }{r^3} \delta_{ij}  | + | m (0, r) \frac{-3 x_l  }{r^5} \delta_{ij} | + | \frac{m(0, r) }{r^3} \cdot 0 |  \\
& \leq 4 \pi \big[ 4 \pi |  \frac{\partial_{x_j } (  r^2  \rho (0, r) \frac{x_l}{r}) }{r^3} x_i  | + | r^2 \rho (0, r) \frac{x_j}{r} \frac{-3 x_l x_i }{r^5} | + | r^2 \rho (0, r) \frac{ x_j }{r} \frac{1}{r^3} \delta_{il} | \big] \\
& \quad + 4 \pi | r^2 \rho(0, r) \frac{x_l}{r} \frac{-3 x_i x_j}{r^5}  | + | m(0, r)  \frac{(-3) (-5) x_l x_i x_j }{r^7} | \\
& \quad + | m(0, r) \frac{1 }{r^5} (-3) (x_i \delta_{jl} + x_j \delta_{il} ) |  + 4 \pi | r^2 \rho (0, r) \frac{x_l}{r}  \frac{1}{r^3} \delta_{ij}  | +  | m(0, r) \frac{-3 x_l  }{r^5} \delta_{ij} |    \\
& \leq 4 \pi \big\{ 4 \pi  \frac{x_i}{r^3} \big[ | \rho (0, r) \frac{x_j x_l}{r} | +| r \frac{\partial}{\partial r} \rho (0, r) \frac{x_j}{r} x_l | + | r \rho (0, r) \delta_{jl} | \big] \\
& \quad + | r^2 \rho (0, r) \frac{x_j}{r} \frac{-3 x_l x_i }{r^5} | + | r^2 \rho (0, r) \frac{ x_j }{r} \frac{1}{r^3} \delta_{il} |  \\
& \quad + | r^2 \rho(0, r) \frac{x_l}{r} \frac{-3 x_i x_j}{r^5}  | + \frac{1}{4 \pi} | m(0, r)  \frac{(-3) (-5) x_l x_i x_j }{r^7} |  \\
& \quad + \frac{1}{4 \pi} | m(0, r) \frac{1 }{r^5} (-3) (x_i \delta_{jl} + x_j \delta_{il} ) |   + | r^2 \rho (0, r) \frac{x_l}{r}  \frac{1}{r^3} \delta_{ij}  | + \frac{1}{4 \pi} | m(0, r) \frac{-3 x_l  }{r^5} \delta_{ij} |   \big\} \ , \\
\end{split}
\end{equation} 
Using that $m (0, r) \leq M$, \eqref{MsizeVP}, \eqref{rho0bound}, \eqref{rho0Ckbound} and $\frac{1}{2} b  < r < \frac{3}{2} b $, we obtain
\begin{equation}
\begin{split}
& \quad | \partial^2_{x_j x_l} E_i (0, r) |  \\
& \leq 4 \pi \big\{ 4 \pi  \frac{x_i}{r^3} \big[ | \frac{3}{8\pi a_0^3} \frac{x_j x_l}{r} | +| r  \frac{3}{8 \pi a_0^3} c_1  \frac{x_j}{r} x_l | + | r \frac{3}{8 \pi a_0^3} \delta_{jl} | \big] \\
& \quad + | r^2 \frac{3}{8 \pi a_0^3} \frac{x_j}{r} \frac{-3 x_l x_i }{r^5} | + | r^2 \frac{3}{8 \pi a_0^3} \frac{ x_j }{r} \frac{1}{r^3} \delta_{il} |  \\
& \quad + | r^2 \frac{3}{8 \pi a_0^3} \frac{x_l}{r} \frac{-3 x_i x_j}{r^5}  | + | \frac{1}{8 \pi} \frac{r^3}{a_0^3}   \frac{(-3) (-5) x_l x_i x_j }{r^7} | +  | \frac{1}{8 \pi} \frac{r^3}{a_0^3}  \frac{1 }{r^5} (-3) (x_i \delta_{jl} + x_j \delta_{il} ) |  \\
& \quad + | r^2 \frac{3}{8 \pi a_0^3} \frac{x_l}{r}  \frac{1}{r^3} \delta_{ij}  | +  | \frac{1}{8 \pi} \frac{r^3}{a_0^3}  \frac{-3 x_l  }{r^5} \delta_{ij} |   \big\} \\
& \leq   100  c_1  a_0^{-3}   \ , \\
\end{split}
\end{equation} 
Hence for $k=2$ we can take $C_0  \geq \max \{ 32, 10,  100 \} =100 $.   

For larger $k$'s the inequality \eqref{E0Ckboundcomputation} can be deduced similarly with properly chosen $C_0$. 

%In general, for any positive integer $k$, there exists $ C_0 > 0$ which only depends on $k$, such that $\| \nabla^k_x E\|_{L^\infty} \leq C_0 a_0^{-3} c_{k-1} = C_0 a_0^{-3} 2^{k-1} \alpha_{k-1} b^{1-k}$, since $\| \nabla^k_x E\|_{L^\infty} $ involves a $c_{k-1}$ factor, and $ c_k =  (\frac{2}{b})^k \alpha_k$. Therefore
% $$ \| E(0) \|_{C^k} \leq  C_0 d_{k-1} a_0^{-3} \ . $$

We will choose 
\begin{equation} \label{a0condition2}
a_0 \geq 2 C_0^{1/3} \eta^{-1/3} d_{k-1} \ ,
\end{equation}
so that
$$ \| E (0) \|_{C^k_x}  \leq C_0 d_{k-1} a_0^{-3} \leq   \eta  \ . $$

Now we can set up the constants $a_0$ and $\epsilon$. Combining \eqref{a0condition4}, \eqref{a0condition3}, \eqref{a0condition1} and \eqref{a0condition2}, we take $a_0$ to be such that
\begin{equation} \label{a0constraint}
a_0 \geq  20 d_{k} \eta^{-1/3}  C_0^{1/3}  \ .
\end{equation}
Therefore $ a_0 \geq  2 C_0^{1/3} \eta^{-1/3} d_{k-1}  > 1  $, since $d_k > d_{k-1} \geq 1$, $\eta < 1$, and $C_0$ is chosen to be greater than $1$. 
Moreover, we take
\begin{equation} \label{epsilonconstraint}
\begin{split}
& \epsilon \leq \beta_0 : = \min \{ \frac{1}{60} a_0^{-1} b, \frac{1}{60} a_0^{-1/3} b^{1/3} ,\frac{1}{60} (b^{-2} + 4 a_0^{-3} b^2 )^{-1/2} , \\
& \qquad \frac{1}{100} a_0^{-1} b N^{-1/3}, \frac{1}{60} a_0^{-3/2} b^{3/2} N^{-1/2} , \frac{\epsilon_0}{4} \}  \ , \\
\end{split}
\end{equation}
so \eqref{epsiloncondition1} is satisfied.

Next we construct the function $T = T(b)$. Recall that $a_0$ and $\epsilon$ depend on $b$. We define
\begin{equation}  \label{Tdefinition}
T = T(b) := a_0 \epsilon^2  - \epsilon^3  - \frac{100 a_0^2}{b^2} \epsilon^4   
\end{equation}
for $b \in (0, +\infty)$. We have $T >0$ due to the constraints above on $a_0$ and $b$.

We now prove that $T(b)$ can be constructed as an increasing function for $b >1$. Indeed, for $b> 1$, we want to take $a_0$ and $\epsilon$ such that \eqref{a0constraint} and \eqref{epsilonconstraint} are satisfied.
From the definition of $d_k$, we notice that there exists $n$ being large enough and independent of $b$, such that 
\begin{equation} \label{ndefinition}
nb  \geq  20  d_{k } \eta^{-1/3}  C_0^{1/3}   
\end{equation}
holds for all $b > 1$. Hence we can take $a_0 = nb$ so that \eqref{a0constraint} holds. Also, from \eqref{a0constraint} and the definition \eqref{epsilonconstraint} of $\beta_0$, we observe that for $b >1$ there exists a constant $C$ independent of $b$, such that
\begin{equation} \label{Cdefinition}
C \leq \beta_0  \ .
\end{equation}
Therefore we can take $\epsilon = C  $, so that \eqref{epsilonconstraint} is satisfied. Using these chosen values of $a_0$ and $\epsilon$, we have, for $b >1$,
\begin{equation} \label{Tdefinitionbgeq1}
T = T (b) = a_0 \epsilon^2  - \epsilon^3  - \frac{100 a_0^2}{b^2} \epsilon^4 = n C^2  b  - C^3   - 100 C^4 n^2 \ ,
\end{equation} 
which is an increasing function of $b \in (1, +\infty)$.

We choose $T_0 = T_0 (r, w, l) > 0$ for $(r, w, l) \in S_+$ as in Lemma \ref{behaviorofcharacteristics}. From the lemma we have
\begin{equation}
\frac{d R}{dt} \leq 0 \ , \ t \in [0, T_0] \ ,
\end{equation}
and 
\begin{equation}
T_0 \geq \frac{r}{|w|} \big(  1- \sqrt{\frac{l + Mr}{ r^2 w^2 + l + Mr }}  \big) \ . 
\end{equation}
We compute, using \eqref{wbound}, \eqref{a0constraint} and \eqref{epsilonconstraint},
\begin{equation}
\begin{split}
T_0
& \geq \frac{r}{|w|}  \big(  1- \sqrt{\frac{l + Mr}{ r^2 w^2 + l + Mr }}  \big)  \geq \frac{r}{|w|} - \frac{\sqrt{l + Mr}}{w^2}  \\
& \geq a_0 \epsilon^2 - \epsilon^3  - \frac{\sqrt{1+ 8 a_0^{-3} b^3 \frac{3}{2} b }}{(\frac{\epsilon}{a_0} - \frac{b}{2 a_0 \epsilon^2} )^2}   \geq a_0 \epsilon^2 - \epsilon^3 - \frac{16 a_0^2 \epsilon^4 }{ ( 2 a_0 \epsilon^3 - b  )^2 }  \\
& >   a_0 \epsilon^2  - \epsilon^3 - \frac{100 a_0^2}{b^2} \epsilon^4   = T  \ . \\
\end{split}
\end{equation}
Therefore, $T \in (0, T_0] $ for every $(r, w, l) \in S_+$. We compute the upper bound for $R(T)$ using \eqref{rbound}, \eqref{wbound} and \eqref{lbound}:
\begin{equation}
\begin{split}
R(T)^2 
& \leq (r + wT)^2 + (l r^{-2} + Mr^{-1} )T^2 \\
& \leq \big[ r + w \big( a_0 \epsilon^2 - \epsilon^3 - \frac{100 a_0^2}{b^2} \epsilon^4  \big) \big]^2 +  (l r^{-2} + Mr^{-1} )\big( a_0 \epsilon^2 - \epsilon^3 - \frac{100 a_0^2}{b^2} \epsilon^4  \big)^2 \\
& \leq (r + a_0 w \epsilon^2 )^2 + \big( w \epsilon^3 + w \frac{100 a_0^2}{b^2} \epsilon^4   \big)^2  + 2 | r +  a_0 w \epsilon^2  | \cdot |w| \big(\epsilon^3 + \frac{100 a_0^2}{b^2} \epsilon^4  \big) \\
& \quad + \big[ 1 \cdot (\frac{1}{2} b)^{-2} + 8 a_0^{-3} b^3 \cdot ( \frac{1}{2} b )^{-1}   \big]  (a_0 \epsilon^2)^2  \\
& \leq (\epsilon^3)^2 + \frac{9}{4} \epsilon^{-4} (\epsilon^3 + \frac{100 a_0^2}{b^2} \epsilon^4 )^2 + 2 \epsilon^3 \cdot \frac{3}{2} \epsilon^{-2} \cdot ( \epsilon^3 + \frac{100 a_0^2}{b^2} \epsilon^4 ) \\
& \quad + ( 4b^{-2} + 16 a_0^{-3} b^2 ) a_0^2 \epsilon^4   \\
& \leq \epsilon^6 + 3 \epsilon^{-4} (2 \epsilon^3)^2 + 3 \epsilon \cdot 2 \epsilon^3 + (4 b^{-2} + 16 a_0^{-3} b^2  ) a_0^2 \epsilon^4  \\
& \leq 16 \epsilon^2   \ . \\
\end{split} 
\end{equation}
The last two lines comes from our choice of the parameters $a_0$ and $\epsilon$. Hence  
\begin{equation}
\sup_{(r, w, l) \in S_+} R(T, r, w, l)  \leq 4 \epsilon  \leq \epsilon_0 \ .
\end{equation}

The lower bounds $ \| \rho (T) \|_{L^\infty_{|x|\leq \epsilon_0}} \  , \ \| E (T) \|_{L^\infty} \geq N $ then follow from Lemma \ref{concentrationlemma} with $K = 4 \epsilon$, together with \eqref{epsilonconstraint}: 
\begin{equation}
\| \rho (T) \|_{L^\infty_{|x|\leq \epsilon_0}} \geq \frac{3}{4 \pi}  \frac{M}{ K^3} \geq \frac{3}{4 \pi}  \frac{\frac{3}{8} a_0^{-3} b^3   }{ 64 \epsilon^3 } \geq N  \ ,  
\end{equation}
\begin{equation}
\| E (T) \|_{L^\infty} \geq  \frac{M}{ K^2} \geq \frac{\frac{3}{8} a_0^{-3} b^3   }{ 16 \epsilon^2 }   \geq N \ .
\end{equation}

This completes the proof of Theorem \ref{mainresultVP}.

\end{proof}

Next we prove Remark 1.2 \textit{ii)}. 

\begin{proof}
We assume $1 \leq k \leq 5$. Recall that we want to take $a_0$ and $\epsilon$ such that \eqref{a0constraint} and \eqref{epsilonconstraint} are satisfied. For $b \in (0, 1)$, from the definition of $d_k$, there exists $\tilde{n}$ independent of $b$, such that 
\begin{equation} \label{tildenbsmall}
\tilde{n} b^{-k} \geq 20 d_{k } \eta^{-1/3}  C_0^{1/3}  \ .  
\end{equation}
Hence we can take $a_0 = \tilde{n} b^{-k}  $ so that \eqref{a0constraint} holds. Also, recall \eqref{a0constraint} and the definition \eqref{epsilonconstraint} of $\beta_0$, we observe that for $b \in (0, 1)$, there exists $\tilde{C}$ independent of $b$, such that 
\begin{equation} \label{tildeCbsmall}
\tilde{C} b^3 \leq \beta_0 \ .
\end{equation}
Hence we can take $\epsilon = \tilde{C} b^3$. With these chosen values of $a_0$ and $\epsilon$, we have 
\begin{equation}  \label{Tdefinitionbleq1}
T = T (b) = a_0 \epsilon^2  - \epsilon^3  - \frac{100 a_0^2}{b^2} \epsilon^4 = \tilde{n} \tilde{C}^2 b^{6-k} - \tilde{C}^3 b^9 - 100 \tilde{C}^4 \tilde{n}^2 b^{10-2k} 
\end{equation} 
for $b \in (0, 1)$.

When $1 \leq k \leq 5$, we first pick $\tilde{n}$ large, then pick $\tilde{C}$ small, such that when $b \in (0, 1)$, the inequalities \eqref{tildeCbsmall} and \eqref{tildenbsmall} as well as $ \tilde{n} \tilde{C}^2 > 9 \tilde{C}^3 + 1000 \tilde{n}^2 \tilde{C}^4 $ are satisfied. With this we can verify that when $b \in (0, 1)$, $T'(b) \geq 0$, and the term $\tilde{n} \tilde{C}^2 b^{6-k} $ is dominant in the expression of $T(b)$. Hence $T(b)$ is an increasing function of $b \in (0,1)$.

Moreover, we construct $T(b)$ for $b >1$ as described in the proof of Theorem \ref{mainresultVP}, so $T(b)$ is an increasing function of $b  \in (1, +\infty)$. Recall that the constraints on $n$ and $C$ are \eqref{ndefinition} and \eqref{Cdefinition}. We take $n$ large enough such that not only \eqref{ndefinition} holds, but also
$$ n C^2   - C^3   - 100 C^4 n^2  \geq \tilde{n} \tilde{C}^2   - \tilde{C}^3  - 100 \tilde{C}^4 \tilde{n}^2  \ . $$
Then \eqref{Tdefinitionbgeq1} gives a function $T= T(b)$ such that $T(b) \geq T(1)$ for all $b>1$. Combining this together with \eqref{Tdefinitionbleq1}, we obtain a function $T(b)$ that is increasing on $(0, +\infty)$ for $1 \leq k \leq 5$. This completes the proof of Remark 1.2 \textit{ii)}. 

\end{proof}

%The following corollary follows easily from Theorem \ref{mainresultVP}. It tells us that the concentrating effect for the system \eqref{VPreformulated}, \eqref{mtr}, \eqref{rhotr}, \eqref{electricfieldexpression} also happens on certain type of bounded domains.   

\begin{flushleft}
\textbf{Remark 3.1} \it{The proof of Theorem \ref{mainresultVP} can actually be applied to a system given by the equations \eqref{VPreformulated}, \eqref{mtr}, \eqref{rhotr}, \eqref{electricfieldexpression} on a bounded domain. This gives the following result: \\
Consider the system given by the equations \eqref{VPreformulated}, \eqref{mtr}, \eqref{rhotr}, \eqref{electricfieldexpression} on a bounded domain $\Omega \subset \mathbb{R}^3$ which contains a ball centered at the origin. Then no matter what boundary conditions we put, for any positive integer $k$ and positive constants $\eta$, $N$, as well as any positive constant $b$ satisfying $ b < \frac{2}{3} \inf_{x \in \partial \Omega} r(x) $, there exists some $T> 0$ and a smooth, spherically symmetric solution which has lifespan at least $T$, such that 
\begin{equation}
\supp_x f(0, x, v) = \{  \frac{1}{2} b \leq |x| \leq \frac{3}{2} b   \} 
\end{equation}
and 
\begin{equation}  
\| \rho (0) \|_{C^k} \  , \ \| E (0) \|_{C^k} \leq \eta \ , 
\end{equation}
while 
\begin{equation} 
\| \rho (T) \|_{L^\infty_{|x|\leq \epsilon_0}} \  , \ \| E (T) \|_{L^\infty } \geq N \ . 
\end{equation}
}
\end{flushleft}

%\begin{proof}
%The proof is identical as the one to Theorem \ref{mainresultVP}.
%\end{proof}

\section{Focusing Solutions of the Relativistic VP system}

In this section, we are going to prove, for the system RVP:
\begin{theorem} \label{mainresultRVP}
For any positive integer $k$ and positive constants $\eta$, $N$, there exists a smooth, spherically symmetric solution of RVP, such that 
\begin{equation}  
\| \rho (0) \|_{C^k} \  , \ \| E (0) \|_{C^k} \leq \eta \ , 
\end{equation}
while for some $T >0$ and some $\epsilon_0 >0$, 
\begin{equation} 
\| \rho (T) \|_{L^\infty_{|x|\leq \epsilon_0}} \  , \ \| E (T) \|_{L^\infty } \geq N \ , 
\end{equation}
\end{theorem}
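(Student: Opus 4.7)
The plan is to follow the strategy of the proof of Theorem \ref{mainresultVP} almost verbatim, replacing Lemma \ref{behaviorofcharacteristicsVP} with Lemma \ref{behaviorofcharacteristics} and adjusting the selection of $T$ to reflect the relativistic speed cap. I would fix parameters $b > 0$, $a_0 > 1$ and $\epsilon \in (0,1)$ to be determined, and use the same initial data
\[
f_0(x,v) = h_{\epsilon, a_0}(x,v)\, \chi(|x|)\, \Gamma\!\left(\tfrac{x\cdot v}{|x|}\right),
\]
with $\chi$ supported on $\{b/2 \leq |x| \leq 3b/2\}$ as in \eqref{f0VP}. Since the formulas for $\rho(0,r)$, $M$, the bounds \eqref{rbound}, \eqref{wbound}, \eqref{lbound}, and the $C^k$-estimates \eqref{rho0Ckbound} and \eqref{E0Ckboundcomputation} are purely kinematic/static computations that do not involve the equation of motion, they transfer unchanged: choosing $a_0$ to satisfy \eqref{a0constraint} yields $\|\rho(0)\|_{C^k}, \|E(0)\|_{C^k} \leq \eta$.

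The key difference lies in the focusing time. In the VP case the chosen $T \approx a_0 \epsilon^2$ reflects that a particle with large $|w|$ traverses distance $r$ in classical time $r/|w|$. Relativistically, the radial speed $|w|/\sqrt{1+w^2+lr^{-2}}$ is bounded by $1$ and approaches $1$ as $|w| \to \infty$, so the focusing time is instead of order $r \sim b$. Accordingly I would define
\[
T := b - (\text{small correction in } \epsilon, a_0, b)
\]
chosen so that $T < T_0$ for every $(r,w,l) \in S_+$. The lower bound in part (2) of Lemma \ref{behaviorofcharacteristics} is $T_0 \geq r\bigl(1 - \sqrt{D/(r^2 w^2 + D)}\bigr)$ with $D = l + Mr\sqrt{1+w^2+lr^{-2}}$; with $|w|$ of order $a_0^{-1}b \epsilon^{-2}$, $l \lesssim \epsilon^2$, and $M \lesssim a_0^{-3}b^3$, one has $D/(r^2 w^2) = O(\epsilon^4)$, so $T_0 \geq r - O(b\epsilon^2)$ and the condition $T < T_0$ is easy to enforce.

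Next I would apply Lemma \ref{behaviorofcharacteristics} (5),
\[
R(T)^2 \leq \Bigl(r - \tfrac{|w|}{\sqrt{1+w^2+lr^{-2}}} T\Bigr)^2 + \tfrac{D}{r^2(1+w^2+lr^{-2})} T^2,
\]
to bound $R(T)$. The second term is $O(D/w^2) = O(\epsilon^4)$ after multiplication by $T^2 \sim b^2$, hence controllable. For the first term, the factor $1 - |w|/\sqrt{1+w^2+lr^{-2}}$ is $O((1+w^2)^{-1}) = O(a_0^2 \epsilon^4 / b^2)$, so $r - (|w|/\sqrt{\cdots})T$ is of order $b \cdot a_0^2 \epsilon^4/b^2$ plus the correction built into $T$; by calibrating the correction one obtains $R(T) \leq K$ for some $K = O(\epsilon)$ uniformly in $(r,w,l)\in S_+$. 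Applying Lemma \ref{concentrationlemma} with this $K$ then yields
\[
\|\rho(T)\|_{L^\infty_{|x|\leq \epsilon_0}} \geq \tfrac{3M}{4\pi K^3} \gtrsim \tfrac{a_0^{-3} b^3}{\epsilon^3}, \qquad \|E(T)\|_{L^\infty} \geq \tfrac{M}{K^2} \gtrsim \tfrac{a_0^{-3} b^3}{\epsilon^2},
\]
and taking $\epsilon$ sufficiently small (depending on $N$, $a_0$, $b$) produces the required lower bound $\geq N$ and a suitable $\epsilon_0$.

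The main obstacle is the relativistic speed cap: unlike the VP case, where the classical travel time $r/|w|$ shrinks as one makes $|w|$ large, here the travel time cannot be made smaller than $r$, so one must extract the smallness of $R(T)$ from the discrepancy $1 - |w|/\sqrt{1+w^2+lr^{-2}}$ instead. This forces a careful choice of $T$ that balances the two terms in part (5) of Lemma \ref{behaviorofcharacteristics}, because now $r$ and the velocity factor times $T$ are both of the same order $\sim b$ and their difference is the quantity that must be made small. Once this calibration is performed, the remaining arithmetic (choice of $a_0$ to control the $C^k$ norms and of $\epsilon$ to control $R(T)$) parallels the calculation leading to \eqref{a0constraint} and \eqref{epsilonconstraint}.
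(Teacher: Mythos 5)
There is a genuine gap, and it is exactly at the point you flag as "the main obstacle" but then claim can be handled by "calibrating the correction": with the VP data kept unchanged, no single time $T$ can bring all of $S(0)$ close to the origin. Your data put particles on the thick shell $b/2 < r < 3b/2$ with momenta $|w| \sim a_0^{-1} b\,\epsilon^{-2} \gg 1$, so every particle is ultra-relativistic and its inward speed satisfies $1 - O(a_0^2\epsilon^4/b^2) \le |w|/\sqrt{1+w^2+lr^{-2}} < 1$, essentially independently of $r$. The VP focusing works precisely because the (unbounded) classical speed can be taken proportional to the radius, so all particles arrive near $r=0$ at the common time $\approx a_0\epsilon^2$; once the speed is capped at $1$ and is uniformly $\approx 1$, the first term in Lemma \ref{behaviorofcharacteristics}(5) is $\approx (r-T)^2$, and as $r$ ranges over an interval of length $b$ this cannot be $O(\epsilon^2)$ for all particles at any one $T$ (at the two edges it is at least $(b/2)^2$ for one of them). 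Concretely, for the outer particles you need $T \ge 3b/2 - O(\epsilon)$, while the inner particles have $T_0 \approx b/2 < T$ (so part (5) does not even apply to them at time $T$), have turned around under the repulsive force ($W$ is strictly increasing along trajectories), and are far from the origin again. Hence the conclusion "$R(T)\le K$ with $K=O(\epsilon)$ uniformly on $S_+$", which is what Lemma \ref{concentrationlemma} needs, cannot be reached from your setup, and the statements "the bounds \eqref{rbound}, \eqref{wbound}, \eqref{lbound} transfer unchanged" and "$T<T_0$ is easy to enforce" are where the argument breaks.

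The paper's proof of Theorem \ref{mainresultRVP} resolves this by changing the data, not just the time: the spatial support is a \emph{thin} shell of thickness $\sim \epsilon^3$ centered at radius $b=\epsilon^{-7/8}$ (so the spread of arrival positions is set by the shell thickness, not by the speed profile), the momenta are of size $|w|\sim\epsilon^{-35/8}$, and $T = b - 10\,\epsilon^{35/8}$; then Lemma \ref{behaviorofcharacteristics}(5) gives $R(T)\lesssim \epsilon^3$ uniformly, since the deviation of the speed from $1$ contributes only $O(\epsilon^{35/4})$ over a time of order $b$. The price is that the initial $C^k$ smallness can no longer come from a fat, slowly varying profile: the cutoff now has large derivatives, and the paper compensates by taking the amplitude $B^{-3}=\epsilon^{9/2}$ (and the scalings $A=\epsilon^5$, $b=\epsilon^{-7/8}$) so small that $\|\rho(0)\|_{C^k}$ and $\|E(0)\|_{C^k}\le\eta$ while $M\sim B^{-3}\epsilon^3 b^2$ is still large compared with the concentration volume $\sim\epsilon^9$, which is what makes $\|\rho(T)\|_{L^\infty_{|x|\le\epsilon_0}}$ and $\|E(T)\|_{L^\infty}$ exceed $N$. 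If you want to repair your proposal you must redo the parameter selection along these lines (thin shell plus small amplitude); keeping the VP initial data cannot work.
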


\begin{proof}
Without the loss of generality, we assume $0 <\eta< 1$ and $N >1$ in the proof. 

We take 
\begin{equation} \label{ep}
\epsilon = \min \{\frac{1}{1000}, \big( \frac{\epsilon_0}{110} \big)^{1/3} , \frac{1}{32} \eta ,  \frac{1}{4} \big(\eta C_0^{-1} (1+c_k)^{-1} \big)^{8/15} ,  10^{-16} N^{-4} \} \ .  
\end{equation}
where $C_0$ is a constant which will be explained in the proof. Set 
\begin{equation} \label{constants}
A = \epsilon^5  , \ B= \epsilon^{-3/2} , \ b = \epsilon^{-7/8} \ ,
\end{equation}
and take
\begin{equation}
T = b - 10 \epsilon^{35/8} > 0 \ .
\end{equation}

Let $H : [0, \infty) \rightarrow [0, \infty)$ be any function satisfying $ \int_{\mathbb{R}^3} H(|u|^2) du = \frac{3}{4 \pi} $ with $\supp (H) \subset [0, 1]$, and rescale it for any $\epsilon \in (0, 1)$:
$$ H_\epsilon (|u|^2) = \frac{1}{\epsilon^9} H \big( \frac{|u|^2}{\epsilon^6} \big)   \ . $$
so that $ \int_{\mathbb{R}^3} H_\epsilon(|u|^2) du = \frac{3}{4 \pi} $ and $\supp (H_\epsilon) \subset [0, \epsilon^6]$. For any $\epsilon >0$, $x$, $v \in \mathbb{R}^3$, define 
$$ h_{\epsilon } (x, v) = H_\epsilon \big(  | \frac{x}{A} + B v |^2 \big) \ . $$
Moreover, we choose the cut-off function $\chi \in C^\infty ((0, \infty); [0, 1] ) $ satisfying $\chi (r) = 0  $ for $r <   b- \epsilon^3 $ or $r >  b+ \epsilon^3 $, $\chi (r) =1$ for $r \in [ b - \frac{1}{2} \epsilon^3, b + \frac{1}{2} \epsilon^3 ]$. For each $k$, there exists $c_k >0$ independent of $\epsilon$ (but depends on $k$), such that $ \| \chi (r) \|_{C^k}  \leq  c_k \epsilon^{-3}$.

We choose the initial data to be 
\begin{equation}
f_0 (x, v) = h_{\epsilon} (x, v) \chi (|x|) \Gamma (\frac{x \cdot v}{r}) \ . 
\end{equation}
where $\Gamma (w) = 0$ for $w \geq 0$, and $\Gamma (w) =1$ for $w <0$. Hence $S(0) \setminus S_+ $ is a zero-measure set.  

For any $(r, w, l) \in S(0)$, we have $ |\frac{x}{A} + B v  |^2  < \epsilon^6 $. Using the angular coordinates and the identity $ |v|^2 = w^2 + lr^{-2} $, this inequality becomes
\begin{equation}
\big( \frac{r}{A} + B w \big)^2 + l \big( \frac{B}{r} \big)^2 < \epsilon^6  \ .
\end{equation}  
The inequality $ | \frac{r}{A} + B w |^2 < \epsilon^6 $ together with \eqref{constants} and \eqref{ep} implies that for $(r, w, l) \in S(0)$,
\begin{equation}
\frac{1}{2} \epsilon^{-7/8}  < b - \epsilon^3 < r < b + \epsilon^3 < \frac{3}{2} \epsilon^{-7/8} \ , 
\end{equation}
\begin{equation}
- 2\epsilon^{-35/8} < - \epsilon^{9/2} - \frac{r}{\epsilon^{7/2}} < w <  - \frac{r}{\epsilon^{7/2}} +  \epsilon^{9/2}  < - \frac{1}{2} \epsilon^{-35/8} \ , 
\end{equation}
\begin{equation}
l < \big( \frac{r}{B} \big)^2 \epsilon^6 <1 \ .
\end{equation}
Notice that the cut-off $\Gamma (w)$ does not affect the smoothness of $f_0$ and $\rho (0)$, since $w$ is bounded away from $0$.

We have 
$$  \rho (0, r) = \int_{\mathbb{R}^3} h_{\epsilon } (x, v) \chi (|x|) \Gamma (\frac{x \cdot v}{r}) dv = \frac{3}{8 \pi B^3} \chi (r) \ .   $$

We compute
\begin{equation} \label{MsizeRVP1}
\begin{split}
M = \int_{\mathbb{R}^3} \rho_0 (x) dx 
& = 4 \pi \int^{b+\epsilon^3 }_{b-\epsilon^3} \rho_0 (r) r^2 dr \\
& \leq \frac{1}{2} B^{-3} \big[  (b+ \epsilon^3)^3 - (b-\epsilon^3)^3  \big]  \\
& = \frac{1}{2} B^{-3}  ( 6 b^2 \epsilon^3 + 2 \epsilon^9  ) \\
& \leq 4 B^{-3}  \epsilon^3 b^2 \ , \\
\end{split}
\end{equation}
and
\begin{equation} \label{MsizeRVP2}
\begin{split}
M = \int_{\mathbb{R}^3} \rho_0 (x) dx 
& \geq 4 \pi \int^{b+ \frac{1}{2} \epsilon^3 }_{b- \frac{1}{2} \epsilon^3} \rho_0 (r) r^2 dr \\
& \geq \frac{1}{2} B^{-3}  \big[  (b+ \frac{1}{2} \epsilon^3)^3 - (b- \frac{1}{2} \epsilon^3)^3  \big]  \\
& =  \frac{1}{2} B^{-3}  ( 3 b^2 \epsilon^3 + \frac{1}{4} \epsilon^9  ) \\
& \geq \frac{3}{2} B^{-3} \epsilon^3 b^2 \ .  \\
\end{split}
\end{equation}

Using \eqref{mtr}, \eqref{rhotr}, \eqref{electricfieldexpression}, we obtain 
\begin{equation} \label{rho0boundRVP}
 \| \rho (0) \|_{L^\infty} \leq \frac{3}{8 \pi B^3}  \leq \eta \ , 
\end{equation} 
and 
\begin{equation} \label{E0boundRVP}
 \| E (0) \|_{L^\infty} \leq \frac{4 M}{b^2} \leq  \frac{ 16 B^{-3} \epsilon^3 b^2}{b^2} \leq  \eta  \ .
\end{equation}
Again by using \eqref{mtr}, \eqref{rhotr}, \eqref{electricfieldexpression}, we have
\begin{equation}
\| \rho (0) \|_{C^k_x}  \leq \sum_k  \| \frac{\partial^k}{\partial r^k}  ( \frac{3}{8 \pi a_0^3} \chi (r)) \|_{L^\infty}   \leq  k  \frac{3 B^{-3} }{8 \pi  }  c_k  \leq \eta \ ,
\end{equation} 
since $ B \geq  c_k^{1/3}  \eta^{-1/3} k^{1/3} $. 

Next we prove
$$ \| E (0) \|_{C^k_x}   \leq  \eta  \ .  $$
For any $k$, the $C^k$ norm of $E(0)$ is controlled by $ 2 C_0 (1 + c_k) (1+ b^3) B^{-3}  $, where $C_0$ is a constant only depending on $k$ and $b$. In case $k=1$, we compute, using \eqref{MsizeRVP1}, \eqref{MsizeRVP2}, \eqref{rho0boundRVP} and \eqref{E0boundRVP},
\begin{equation}
\begin{split}
| \partial_{x_j} E_i (0, r) |
& \leq  | \partial_{x_j}  ( \frac{m (0, r)}{r^3} x_i )   |  \\
& \leq  |  \frac{\partial_{x_j} m (0, r) }{r^3} x_i  | + | m (0, r) \frac{-3 x_j x_i }{r^5} | + | \frac{m(0, r) }{r^3} \delta_{ij} |  \\
& \leq 4 \pi \big[  | r^2 \rho (0, r) \frac{x_j}{r} \frac{x_i }{r^3} x_i  | + M |   \frac{-3 x_j x_i }{r^5} | + M | \frac{1}{r^3} \delta_{ij} |  \big] \\
& \leq   4 \pi \cdot 5 \cdot (1+c_k) \max \{ M, \frac{3}{ 4 \pi B^{3} } \} \max \{ 1, \frac{1}{(\frac{1}{2} b)^3} \}    \\
& \leq   1500 \pi (1+ b^3) (1+ b^{-3})  (1+ c_k) B^{-3}   \ , \\
\end{split}
\end{equation} 
so we can take $C_0  = 32  + 1500 \pi   $. Similarly, in case $k=2$, we have
\begin{equation}
\begin{split}
& \quad | \partial^2_{x_j x_l} E_i (0, r) | \\
& \leq  | \partial_{x_l}  (  \frac{\partial_{x_j} m (0, r) }{r^3} x_i  +  m (0, r) \frac{-3 x_j x_i }{r^5}  + \frac{m(0, r) }{r^3} \delta_{ij})   |  \\
& \leq  |  \frac{\partial_{x_j x_l} m (0, r) }{r^3} x_i  | + | \partial_{x_j} m (0, r) \frac{-3 x_j x_i }{r^5} | + | \frac{\partial_{x_j} m(0, r) }{r^3} \delta_{il} |  \\
& \quad + |  \frac{\partial_{x_l} m (0, r) }{r^3} x_i x_j  (-3)| + | m (0, r) \frac{(-3) (-5) x_l x_i x_j }{r^7} | + | \frac{m(0, r) }{r^5} (-3) (x_i \delta_{jl} + x_j \delta_{il} ) |  \\
& \quad + |  \frac{\partial_{ x_l} m (0, r) }{r^3} \delta_{ij}  | + | m (0, r) \frac{-3 x_l  }{r^5} \delta_{ij} | + | \frac{m(0, r) }{r^3} \cdot 0 |  \\
& \leq 4 \pi \big[ |  \frac{\partial_{x_j }  r^2  \rho (0, r) \frac{x_l}{r} }{r^3} x_i  | + | r^2 \rho (0, r) \frac{x_j}{r} \frac{-3 x_j x_i }{r^5} | + | r^2 \rho (0, r) \frac{ x_j }{r} \frac{1}{r^3} \delta_{il} |  \\
& \quad + | r^2 \rho(0, r) \frac{x_l}{r} \frac{-3 x_i x_j}{r^5}  | + M |   \frac{(-3) (-5) x_l x_i x_j }{r^7} | + M | \frac{1 }{r^5} (-3) (x_i \delta_{jl} + x_j \delta_{il} ) |  \\
& \quad + | r^2 \rho (0, r) \frac{x_l}{r}  \frac{1}{r^3} \delta_{ij}  | +  M |  \frac{-3 x_l  }{r^5} \delta_{ij} |   \big] \\
& \leq   12000 \pi (1+ b^3) (1+ b^{-4})  (1+ c_k) B^{-3}   \ , \\
\end{split}
\end{equation} 
so we can take $C_0  = 32  + 13500 \pi $ for $k=2$.   

For other $k$'s $C_0$ can be deduced similarly. Since $ \epsilon \leq \frac{1}{4} \big(c_1 C_0^{-1} (1+c_k)^{-1} \big)^{8/15} $, we have
$$ \| E (0) \|_{C^k_x}  \leq 2 C_0 (1 + c_k) (1+ b^3) B^{-3}  \leq  \eta  \ .  $$

We choose $T_0 = T_0 (r, w, l) > 0$ for $(r, w, l) \in S_+$ as in Lemma \ref{behaviorofcharacteristics}. Therefore
\begin{equation}
\frac{d R}{dt} \leq 0 \ , \ t \in [0, T_0] \ , 
\end{equation}
\begin{equation}
D \leq 1 + 8 B^{-3} \epsilon^3 b^2 \cdot 2 b \sqrt{1 + 4 \epsilon^{-35/4} + 4 \epsilon^{7/4} }  \leq  4 \ . 
\end{equation}
\begin{equation}
T_0 > r - \frac{\sqrt{D}}{w} \geq b - 8 \epsilon^{35/8} > T = b - 10 \epsilon^{35/8}  \ .
\end{equation}

Moreover, from Lemma \ref{behaviorofcharacteristics} we have
\begin{equation}
R(T)^2 \leq \big( r - \frac{|w|}{ \sqrt{1+ w^2 + l r^{-2}}} T  \big)^2 + \frac{D}{r^2 (1+ w^2 + l r^{-2} ) } T^2 : = I + II \ .
\end{equation}
We define $g(x) =\frac{1- x}{1+2x} $, so $g'(0) = -3$, $g''(x) >0$. We compute 
\begin{equation}
\begin{split}
\frac{|w|}{ \sqrt{1+ w^2 + l r^{-2}}} 
& \geq  \frac{ r \epsilon^{-7/2} - \epsilon^{9/2} }{ \sqrt{  1+ ( r \epsilon^{-7/2} + \epsilon^{9/2} )^2  + 4 \epsilon^{7/4}  } }   \\ 
& \geq  \frac{1/2- \epsilon^{71/8} }{ \sqrt{   ( 3/2 + \epsilon^{71/8} )^2  + 2 \epsilon^{35/4}  } }   \\ 
& \geq  \frac{1- 2 \epsilon^{35/4} }{ \sqrt{   ( 1 + 4 \epsilon^{35/4}  )^2 } }   \\ 
& = g(2 \epsilon^{35/4} ) \\
& \geq g(0) + g'(0) \cdot 2 \epsilon^{35/4}  \\
& = 1 - 6 \epsilon^{35/4}  \ . \\
\end{split}
\end{equation}

It follows that
\begin{equation}
\begin{split}
I 
& \leq \big[  r - (1 - 6 \epsilon^{35/4} ) T \big]^2 \\
& \leq \big[  b + \epsilon^3 - (1 - 6 \epsilon^{35/4} ) (b - 10 \epsilon^{35/8} ) \big]^2 \\
& \leq 10 \big( \epsilon^6 +36 b^2 \epsilon^{35/2} + 100 \epsilon^{35/4} + 3600 \epsilon^{35/2} \epsilon^{35/4} \big) \\
& \leq 10 \big( \epsilon^6 +36 \epsilon^{63/4 } + 100 \epsilon^{35/4} + 3600 \epsilon^{105/4}  \big) \\
& = 10000 \epsilon^6 \ . \\
\end{split}
\end{equation}
Moreover, 
\begin{equation}
II \leq \frac{D}{r^2 w^2} T^2 \leq \frac{4}{\frac{1}{4} b^2 \cdot \frac{1}{4} \epsilon^{-35/4} } b^2 \leq 256 \epsilon^{35/4} \ .
\end{equation}  
We conclude $R(T) \leq 110 \epsilon^3 $. We have
\begin{equation}
\sup_{(r, w, l) \in S(0)} R(T, r, w, l) = \sup_{(r, w, l) \in S_+} R(T, r, w, l)  \leq 110 \epsilon^3   \ .
\end{equation}
 
Applying Lemma \ref{concentrationlemma} with $K = 110 \epsilon^3 \leq \epsilon_0 $, we obtain, due to the choice of $\epsilon$:
\begin{equation}
\| \rho (T) \|_{L^\infty_{|x|\leq \epsilon_0}} \geq \frac{3}{4 \pi}  \frac{M}{ K^3} \geq \frac{3}{4 \pi}  \frac{3 B^{-3} \epsilon^3 b^2  }{ 1331000 \epsilon^9 } \geq N  \ ,  
\end{equation}
\begin{equation}
\| E (T) \|_{L^\infty} \geq  \frac{M}{ K^2} \geq \frac{3 B^{-3} \epsilon^3 b^2  }{ 12100 \epsilon^6 }   \geq N \ .
\end{equation}
This completes the proof of the theorem.

\end{proof}

\section{Acknowledgement}

The author thanks her advisor, Walter Strauss for all the guidance, encouragement and patience. Also, she thanks Jonathan Ben-Artzi for helpful discussions.


\begin{thebibliography}{00}
\bibitem{BCP2} J. Ben-Artzi, S. Calogero and S. Pankavich, \emph{Arbitrarily large solutions of the Vlasov-Poisson system}, SIAM Journal on Mathematical Analysis (2018) 50(4): 4311–4326 
\bibitem{BCP1} J. Ben-Artzi, S. Calogero and S. Pankavich, \emph{Concentrating solutions of the relativistic Vlasov-Maxwell system}, Commun. Math. Sci. (to appear), preprint - arxiv: 1807.02801
\bibitem{Friedberg1} J. P. Friedberg, \emph{Ideal Magnetohydrodynamics}, Plenum Press, New York, 1987
\bibitem{Nicholson1} D. R. Nicholson, \emph{Introduction to plasma theory}, Wiley, New York, 1983
\bibitem{G1} R. Glassey, \emph{The Cauchy problem in kinetic theory}, SIAM, 1996
\bibitem{GPS1} R. Glassey, S. Pankavich and J. Schaeffer, \emph{Decay in time for a one-dimensional, two component plasma}, Math. Methods Appl. Sci. (2008) 31:2115-2132
\bibitem{GPS2} R. Glassey, S. Pankavich and J. Schaeffer, \emph{On long-time behavior of monocharged and neutral plasma in one and one-half dimensions}, Kinetic and Related Models (2009) 2: 465-488
\bibitem{GPS3} R. Glassey, S. Pankavich and J. Schaeffer, \emph{Large time behavior of the relativistic Vlasov-Maxwell system in low space dimension}, Differential and Integral Equations (2010) 23: 61-77
\bibitem{GPS4} R. Glassey, S. Pankavich and J. Schaeffer, \emph{Time decay for solutions to the one-dimensional equations of plasma dynamics}, Quarterly of Applied Mathematics (2010) 68: 135-141
\bibitem{GS1} R. Glassey and J. Schaeffer, \emph{On symmetric solutions of the relativistic Vlasov-Poisson system}, Comm. Math. Phys. (1985) 101(4): 459–473
\bibitem{H1} E. Horst, \emph{Symmetric plasmas and their decay}, Comm. Math. Phys. (1990) 126:613-633
\bibitem{IR1} R. Illner and G. Rein, \emph{Time decay of the solutions of the Vlasov-Poisson system in the plasma physical case}, Math. Methods Appl. Sci. (1996) 19: 1409-1413
\bibitem{LP1} P. L. Lions and B. Perthame, \emph{Propogation of moments and regularity for the three dimensional Vlasov-Poisson system}, Invent. Math. (1991) 105:415-430.
\bibitem{P1} K. Pfaffelmoser, \emph{Global classical solution of the Vlasov-Poisson system in three dimensions for general initial data}, J. Diff. Eq. (1992) 95(2):281-303.  
\bibitem{RT1} G. Rein and L. Taegert, \emph{Gravitational collapse and the Vlasov–Poisson system}, Annales Henri Poincaré (2016) 17(6): 1415-1427

\end{thebibliography}
\end{document}